\numberwithin{equation}{section}
\theoremstyle{plain}   
\newtheorem{bigthm}{Theorem}   
\newtheorem{theorem}[equation]{Theorem}  
\newtheorem{lemma}[equation]{Lemma}         
\newtheorem{prop}[equation]{Proposition}
\theoremstyle{definition}
\theoremstyle{remark}
\newtheorem{remark}[equation]{Remark}
\newcommand{\Tor}{\operatorname{Tor}}
\newcommand{\HH}{\operatorname{HH}}
\newcommand{\Z}{\mathbb{Z}}
\newcommand{\colim}{\operatornamewithlimits{colim}}
\newcommand{\pr}{\operatorname{pr}}
\newcommand{\xto}{\xrightarrow}
\journalname{Mathematische Annalen}
\begin{document}

\title{On a conjecture of Vorst\thanks{Both authors were supported in
part by grants from the National Science Foundation. The first author
received additional support from the Japan Society for the Promotion
of Science.}}

\author{Thomas Geisser \and Lars Hesselholt}

\institute{Thomas Geisser \at
University of Southern California, Los Angeles, California\\
\email{geisser@usc.edu}
\and
Lars Hesselholt \at
Nagoya University, Nagoya, Japan \\
\email{larsh@math.nagoya-u.ac.jp}
}

\date{}

\maketitle


\section*{Introduction}

A ring $R$ is defined to be $K_n$-regular, if the map $K_n(R) \to
K_n(R[t_1,\dots,t_r])$ induced by the canonical inclusion is an
isomorphism for all $r \geqslant 0$~\cite[Definition~2.2]{bass1}. It
was proved by Quillen~\cite[Corollary of Theorem~8]{quillen} that a
(left) regular noetherian ring is $K_n$-regular for all integers
$n$. A conjecture of Vorst~\cite[Conjecture]{vorst} predicts that,
conversely, if $R$ is a commutative ring of dimension $d$ essentially
of finite type over a field $k$, then $K_{d+1}$-regularity implies
regularity. Recently, Corti\~{n}as, Haesemeyer, and
Weibel showed that the  conjecture holds, if the field $k$ has
characteristic zero~\cite[Theorem~0.1]{cortinashaesemeyerweibel}.
In this  paper, we prove the following slightly weaker result, if $k$ 
is an infinite perfect field of characteristic $p > 0$ and strong
resolution of singularities holds over $k$ in the sense of
Section~\ref{ktheorysection} below. 

\begin{bigthm}\label{maintheorem}Let $k$ be an infinite perfect field of
characteristic $p > 0$ such that strong resolution of singularities
holds over $k$. Let $R$ be a localization of a $d$-dimensional
commutative $k$-algebra of finite type and suppose that $R$ is
$K_{d+1}$-regular. Then $R$ is a regular ring.
\end{bigthm}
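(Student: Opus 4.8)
The plan is to reduce the theorem to a comparison, through a finite range of degrees, between algebraic $K$-theory and topological cyclic homology, and then to detect regularity by an explicit computation with de Rham--Witt forms. First I would make standard reductions. By Vorst's theorem, $K_{d+1}$-regularity of $R$ implies $K_n$-regularity for every $n \le d+1$, and $K_n$-regularity --- which amounts to the vanishing of all iterated $N^jK_n$ --- passes to localizations; since regularity may be checked locally, it suffices to treat the case that $R$ is local, and, after replacing $d$ by $\dim R$, that $d = \dim R$. Moreover $R$ may be taken reduced: if $0 \ne a$ lies in the nilradical with $a^2 = 0$, then the unit $1 + at$ of $R[t]$ yields a nonzero class in $NK_1(R)$, so a $K_1$-regular ring is reduced. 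Thus $R$ is the reduced local ring at a point of a reduced affine $k$-scheme $X = \Spec A$ of finite type and of dimension $d$, and, $k$ being perfect, $X$ is generically smooth over $k$.

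Next I would reinterpret the hypothesis. On schemes essentially of finite type over $k$, write $\mathcal FK := \mathrm{fib}(K \to K_{\cdh})$ and $\mathcal F\TC := \mathrm{fib}(\TC \to \TC_{\cdh})$ for the homotopy fibers of the maps from (nonconnective) $K$-theory and $p$-complete topological cyclic homology to their cdh-sheafifications. Under strong resolution of singularities the cdh-sheafification of $K$-theory is Weibel's homotopy $K$-theory, and the spectral sequence relating the iterated $N^\bullet K$ to homotopy $K$-theory shows that the reductions above give $\pi_n\mathcal FK(R) = 0$ for all $n \le d+1$.

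The core step is the identification $\mathcal FK \simeq \mathcal F\TC$. It suffices to know that the homotopy fiber of the cyclotomic trace, $K^{\mathrm{inf}} := \mathrm{fib}(K \to \TC)$, satisfies cdh-descent on finite-type $k$-schemes: then in the commutative square with rows $K \to \TC$ and $K_{\cdh} \to \TC_{\cdh}$, whose row-fibers are $K^{\mathrm{inf}}$ and $(K^{\mathrm{inf}})_{\cdh} \simeq K^{\mathrm{inf}}$, the column-fibers $\mathcal FK$ and $\mathcal F\TC$ coincide. To establish cdh-descent for $K^{\mathrm{inf}}$ I would use that it is a finitary invariant, that it is invariant under nilpotent extensions (the theorem of McCarthy, resp.\ Dundas--Goodwillie--McCarthy), and that it satisfies excision for Milnor squares (the bi-relative comparison of algebraic $K$-theory and topological cyclic homology); granted these, strong resolution of singularities permits the usual d\'evissage over abstract blow-up squares, promoting Nisnevich descent to cdh-descent. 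Hence $\pi_n\mathcal F\TC(R) = 0$ for all $n \le d+1$.

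It remains to prove that, for the reduced local ring $R$ of a point on a $d$-dimensional finite-type $k$-scheme, the vanishing of $\pi_n\mathcal F\TC(R)$ for all $n \le d+1$ forces $R$ to be regular; since $k$ is perfect and $R$ is essentially of finite type, this is equivalent to smoothness of $R$ over $k$. I would argue by induction on $d$, the case $d = 0$ reducing to the fact that a reduced finite-type local $k$-algebra is a field. For $d \ge 1$, choose a strong resolution $Y \to X$ with exceptional divisor $E$ over the singular locus $Z$, both of dimension $< d$; since $Y$ is regular, Quillen's theorem gives $\mathcal FK(Y) = 0$ and hence $\mathcal F\TC(Y) = 0$, so the failure of cdh-descent for $\TC$ at $X$ is carried, through this square, by the lower-dimensional data $Z$, $E$ together with a term computing the modules of K\"ahler differentials $\Omega^j_{R/k}$ for $j \le d$ (equivalently, the truncation $\tau_{\le d}L_{R/k}$ of the cotangent complex, or the relevant part of $\THH_*(R/k)$). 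If $R$ were not regular, this term would be nonzero, and by the inductive hypothesis together with the Hesselholt--Madsen description of $\TC$ of smooth $k$-algebras via the big de Rham--Witt complex it would produce a nonzero class in $\pi_n\mathcal F\TC(R)$ with $n \le d+1$, a contradiction. I expect this last step to be the main obstacle: one must pin down exactly which homotopy group of $\mathcal F\TC$ detects non-regularity and verify that $d+1$ is precisely the bound the cotangent complex forces, and propagating the de Rham--Witt computation through the resolution square is delicate. In characteristic $p$ there is no Goodwillie isomorphism to fall back on --- the reason the argument must be routed through $\TC$ rather than cyclic homology --- and correctly treating the non-reduced exceptional divisor in the cdh-descent of $K^{\mathrm{inf}}$ is the remaining technically demanding point.
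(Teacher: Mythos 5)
Your route is genuinely different from the paper's, and it has a gap at exactly the point you yourself flag as ``the main obstacle'': the claim that vanishing of $\pi_n$ of the fiber of $\TC \to \TC_{\cdh}$ (equivalently, under your identification, of $K \to K_{\cdh}$) in degrees $n \leqslant d+1$ forces regularity is asserted but never proved, and no mechanism is offered for producing a nonzero class in that range from non-regularity. Saying that the blow-up square leaves behind ``a term computing $\Omega^j_{R/k}$ for $j \leqslant d$'' is not something the $\TC$/de Rham--Witt formalism hands you: in characteristic zero Corti\~{n}as--Haesemeyer--Weibel can isolate differential forms inside cdh-fibrant $\HH$ and $\HC$ via the HKR and $\lambda$-decompositions and then invoke a commutative-algebra criterion for regularity, but no analogous decomposition of $\TC$ or $\TR$ exists in characteristic $p$, and the assertion that ``$d+1$ is precisely the bound the cotangent complex forces'' is exactly the statement that would have to be established. (There is also an off-by-one: $K_q$-regularity for $q \leqslant d+1$ gives vanishing of the fiber of $K \to K_{\cdh}$ only in degrees $n \leqslant d$.) The earlier steps are reasonable to cite from the literature --- nil-invariance of $K^{\mathrm{inf}}$, bi-relative $K = \TC$ with finite coefficients, and the Haesemeyer-style d\'evissage under strong resolution --- but even granting cdh-descent for $K^{\mathrm{inf}}$, your induction still needs a nonvanishing input at the singular point, and the sketch supplies none.

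The paper closes exactly this gap by elementary means and never needs $\TC$, the cotangent complex, or descent for $K^{\mathrm{inf}}$. It splits the problem into two halves with $\Z/p\Z$-coefficients: an upper bound --- Vorst's theorem and the $N_sK$ spectral sequence identify $K_q(R_{\mathfrak{m}},\Z/p\Z)$ with $KH_q(R_{\mathfrak{m}},\Z/p\Z)$ for $q \leqslant d+1$, and cdh-descent for $KH$ plus the Geisser--Levine vanishing of mod $p$ $K$-theory of smooth schemes above the dimension give $K_{d+1}(R_{\mathfrak{m}})/pK_{d+1}(R_{\mathfrak{m}}) = 0$; and a lower bound --- for the square-zero ``fat point'' $A = R_{\mathfrak{m}}/\mathfrak{m}^2R_{\mathfrak{m}} \cong \kappa[x_1,\dots,x_r]/(x_ix_j)$, the Dennis trace to $\HH_*(A/\kappa)$, computed explicitly via the decomposition by cyclical words, detects the symbols $\{1+x_1,\dots,1+x_q\}$ for all $q \leqslant r = e_{\mathfrak{m}}$, so $K_q(R_{\mathfrak{m}})/pK_q(R_{\mathfrak{m}}) \neq 0$ in that range. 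Comparing the two bounds forces $e_{\mathfrak{m}} \leqslant d_{\mathfrak{m}}$. This fat-point Hochschild detection is precisely the ingredient your proposal is missing; without it, or a proved substitute inside the $\TC$ framework, the argument does not close.
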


We also prove a number of results for more general fields of
characteristic $p > 0$. For instance, we show in
Theorem~\ref{maintheoremplus} that, if strong resolution of
singularities holds over all infinite perfect fields of characteristic
$p$, then for every field $k$ that contains an infinite perfect
subfield of characteristic $p$ and every $k$-algebra $R$ essentially
of finite type, $K_q$-regularity for all $q$ implies regularity.

We give a brief outline of the proof of Theorem~\ref{maintheorem}. Let
$\mathfrak{m} \subset R$ be a maximal ideal, and let $d_{\mathfrak{m}}
= \dim(R_{\mathfrak{m}})$ and $e_{\mathfrak{m}} =
\dim_{R/\mathfrak{m}}(\mathfrak{m}/\mathfrak{m}^2)$ be the dimension
and embedding dimension, respectively. One always has
$d_{\mathfrak{m}} \leqslant e_{\mathfrak{m}}$ and the ring $R$ is said
to be regular if $d_{\mathfrak{m}} = e_{\mathfrak{m}}$ for every
maximal ideal $\mathfrak{m} \subset R$. Now, we show in
Theorem~\ref{ktheorem} below that if $R$ is $K_{d+1}$-regular, then the group $K_{d_{\mathfrak{m}}+1}(R_{\mathfrak{m}})/pK_{d_{\mathfrak{m}}+1}(R_{\mathfrak{m}})$
is zero for every maximal ideal $\mathfrak{m}
\subset R$. We further show in Theorem~\ref{hhtheorem} below that for every
maximal ideal $\mathfrak{m} \subset R$, the group
$K_q(R_{\mathfrak{m}})/pK_q(R_{\mathfrak{m}})$ is non-zero for all $0
\leqslant q \leqslant e_{\mathfrak{m}}$. Together the two theorems
show that $d_{\mathfrak{m}} \geqslant e_{\mathfrak{m}}$ as
desired. Theorem~\ref{maintheorem} follows. 

\section{$K$-theory}\label{ktheorysection}

In this section, we prove Theorem~\ref{ktheorem} below. We say that strong
resolution of singularities holds over the (necessarily perfect) field
$k$ if for every integral scheme $X$ separated and of finite type
over $k$, there exists a sequence of blow-ups
$$X_n \to X_{n-1} \to \dots \to X_1 \to X_0 = X$$
such that the reduced scheme $X_r^{\operatorname{red}}$ is smooth over
$k$; the center $Y_i$ of of the blow-up $X_{i+1} \to X_i$ is connected
and smooth over $k$; the closed embedding of $Y_i$ in $X_i$ is
normally flat; and $Y_i$ is nowhere dense in $X_i$.

\begin{prop}\label{KHproposition}Let $k$ be an infinite perfect field
of characteristic $p > 0$ and assume that strong resolution of
singularities holds over $k$. Let $X$ be the limit of a cofiltered
diagram $\{ X_i \}$ with affine transition maps of $d$-dimensional
schemes separated and of finite type over $k$. Then $KH_q(X,\Z/p\Z)$
vanishes, for $q > d$. 
\end{prop}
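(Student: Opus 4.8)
The plan is to reduce the vanishing of $KH_*(X,\Z/p\Z)$ in high degrees to a statement about $KH$ with $\Z/p\Z$-coefficients on \emph{smooth} schemes, where one can invoke the theorem of Geisser--Levine identifying mod-$p$ $K$-theory (equivalently, mod-$p$ $KH$, since smooth schemes are $K$-regular) with motivic cohomology via the dlog map, together with the known vanishing range for the latter. Concretely, I would first use the fact that $KH$ is a finitary localizing invariant together with the standard continuity argument to reduce to the case where $X = X_i$ is a single $d$-dimensional scheme separated and of finite type over $k$; the cofiltered limit with affine transition maps then poses no difficulty because $KH(X,\Z/p\Z) \simeq \colim_i KH(X_i,\Z/p\Z)$ and each $X_i$ has dimension at most $d$.

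The main tool is then the cdh-descent spectral sequence for $KH$. Here I would use that, by Haesemeyer's theorem (valid in characteristic $p$ under strong resolution of singularities), $KH$ satisfies cdh-descent, so there is a spectral sequence
\begin{equation*}
E_2^{s,t} = H^s_{\cdh}(X, a_{\cdh}\,KH_{-t}(-,\Z/p\Z)) \Longrightarrow KH_{-s-t}(X,\Z/p\Z).
\end{equation*}
The cdh-sheafification $a_{\cdh} KH_n(-,\Z/p\Z)$ can be computed on smooth schemes, where $KH_n(-,\Z/p\Z) \cong K_n(-,\Z/p\Z)$, and the latter is, by Geisser--Levine, built from the sheaves $\nu(r) = W_1\Omega^r_{\log}$ of logarithmic de Rham--Witt forms; in particular these sheaves vanish for $r$ exceeding the dimension, so $a_{\cdh} KH_n(-,\Z/p\Z)$ is a bounded-above complex concentrated in a controlled range. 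Combining the cohomological dimension bound $\cd_{\cdh}(X) \leqslant d$ (strong resolution gives that the cdh-topology on finite-type $k$-schemes of dimension $\leqslant d$ has bounded cohomological dimension $d$) with the vanishing of the coefficient sheaves $\nu(r)$ for $r > \dim$, one checks that every term $E_2^{s,t}$ contributing to $KH_q(X,\Z/p\Z)$ with $q > d$ vanishes.

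The step I expect to be the main obstacle is controlling the cdh-sheafified coefficients precisely enough: one needs not merely that $K_n(-,\Z/p\Z)$ on smooth schemes is given by logarithmic de Rham--Witt cohomology, but that after cdh-sheafification the relevant Nisnevich-to-cdh comparison does not introduce contributions outside the expected range — i.e. that $a_{\cdh}\nu(r)$ is still concentrated in cohomological degrees $\leqslant r$ and vanishes for $r$ larger than the dimension of the scheme being evaluated on. This is where strong resolution of singularities is essential: it is used both to get cdh-descent for $KH$ in the first place and to identify $a_{\cdh}\nu(r)$ (via its relation to the cdh-sheafification of $\Omega^r$ and to motivic cohomology with $\Z/p\Z$-coefficients, following Geisser--Levine and the cdh-descent results of Geisser for these sheaves). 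Once the coefficient complex is pinned down, the degree count is a routine diagonal argument in the spectral sequence: $E_2^{s,t}$ can be nonzero only for $0 \leqslant s \leqslant d$ and $0 \leqslant -t \leqslant d$, forcing $KH_q(X,\Z/p\Z) = 0$ for $q > d$, which is the assertion of the proposition.
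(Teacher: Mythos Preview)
Your argument is correct, and rests on the same two external inputs as the paper's proof: Haesemeyer's cdh-descent for $KH$ (the proof of which carries over verbatim to characteristic $p$ once strong resolution is assumed) and the Geisser--Levine vanishing $K_q(U,\Z/p\Z)=0$ for $q>\dim U$ on smooth $U$ over a perfect field. The route, however, is different. The paper does not invoke the full descent spectral sequence; instead it first reduces to $X$ integral (via nil-invariance of $KH$ and the closed Mayer--Vietoris sequence) and then runs a direct induction on $d$: choose a resolution $X'\to X$, form the abstract blow-up square with center $Y$ containing the singular locus, and use only the resulting long exact sequence in $KH(-,\Z/p\Z)$. The inductive hypothesis kills the $Y$- and $Y'$-terms above degree $d-1$, and Geisser--Levine kills the $X'$-term above degree $d$.

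What each approach buys: your spectral-sequence argument is a clean one-shot count and avoids the preliminary reduction to integral schemes, but it requires as additional input the finiteness of cdh cohomological dimension (for convergence) and a precise identification of the cdh-sheafified homotopy groups. The paper's inductive argument is more elementary in that it uses cdh-descent only in the form of a single Mayer--Vietoris square, and needs no cohomological-dimension statement or sheaf-theoretic identification; the Geisser--Levine input is applied directly to the single smooth scheme $X'$. Your worry about ``controlling the cdh-sheafified coefficients'' is in fact not a serious obstacle---under resolution the cdh-points over $X$ are henselian local rings of smooth $k$-schemes of dimension at most $d$, so Geisser--Levine immediately gives $a_{\cdh}KH_n(-,\Z/p\Z)|_X=0$ for $n>d$, and that alone forces the relevant $E_2$-column to vanish---but the paper sidesteps the issue entirely.
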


\begin{proof}It follows from~\cite[Sect.~IV.8.5]{ega} that for all
integers $q$, the canonical map
$$\colim_i K_q(X_i,\Z/p\Z) \to K_q(X,\Z/p\Z)$$
is an isomorphism. Therefore, using the natural spectral sequence
$$E_{s,t}^1 = N_sK_t(U,\Z/p\Z) \Rightarrow KH_{s+t}(U,\Z/p\Z),$$
we conclude that for all integers $q$, the canonical map
$$\colim_i KH_q(X_i,\Z/p\Z) \to KH_q(X,\Z/p\Z)$$
is an isomorphism. Hence, we may assume that $X$ itself is a
$d$-dimensional scheme separated and of finite type over
$k$. In fact, we may even assume that $X$ is integral. Indeed, it
follows from~\cite[Theorem~2.3]{weibel4} that for all integers $q$, the
canonical map
$$KH_q(X,\Z/p\Z) \to KH_q(X^{\operatorname{red}},\Z/p\Z)$$
is an isomorphism, so we may assume that $X$ is reduced. Moreover, if
$X_1 \subset X$ is an irreducible component and $X_2 \subset X$ the
closure of $X \smallsetminus  X_1$, then $X_{12} = X_1 \cap X_2$ has
smaller dimension than $X$ and by~\cite[Corollary~4.10]{weibel4} there
is a long exact sequence
$$\cdots \to
KH_q(X,\Z/p\Z) \to
KH_q(X_1,\Z/p\Z) \oplus KH_q(X_2,\Z/p\Z) \to
KH_q(X_{12},\Z/p\Z) \to \cdots$$
Therefore, a downward induction on the number of irreducible
components shows that we can assume $X$ to be integral. So we let $X$
be integral and proceed by induction on $d \geqslant 0$. In the case
$d = 0$, $X$ is a finite disjoint union of prime spectra of fields
$k_{\alpha}$ with $[k_{\alpha} \colon k] < \infty$. It follows that
the canonical maps 
$$KH_q(X,\Z/p\Z) \leftarrow
K_q(X,\Z/p\Z) \to
\prod_{\alpha} K_q(k_{\alpha},\Z/p\Z)$$
are isomorphisms, and since the fields $k_{\alpha}$ again are perfect
of characteristic $p > 0$, the right-hand group is zero, for $q > 0$
as desired~\cite{kratzer}. So we let $d \geqslant 1$ and assume that
the statement has been proved for smaller $d$. By the assumption that
resolution of singularities holds over $k$, there exists a proper
bi-rational morphism $X' \to X$ from a scheme $X'$ smooth over $k$. We
may further assume that $X'$ is of dimension $d$. We choose a closed
subscheme $Y$ of $X$ that has dimension at most $d-1$ and contains the
singular set of $X$ and consider the cartesian square 
$$\xymatrix{
{ Y' } \ar[r] \ar[d] &
{ X' } \ar[d]<-.2ex> \cr
{ Y } \ar[r] &
{ X. } \cr
}$$
Since the field $k$ is assumed to be an infinite perfect field such
that strong resolution of singularities holds over $k$, the proof
of~\cite[Theorem~3.5]{haesemeyer} shows that the cartesian square above
induces a long exact sequence
$$\cdots \to KH_q(X,\Z/p\Z) \to KH_q(X',\Z/p\Z) \oplus KH_q(Y,\Z/p\Z)
\to KH_q(Y',\Z/p\Z) \to \cdots$$
Now, the schemes $Y$ and $Y'$ are of dimension at most $d-1$ and are
separated and of finite type over $k$. Therefore, the groups
$KH_q(Y,\Z/p\Z)$ and $KH_q(Y',\Z/p\Z)$ vanish, for $q > d-1$, by the
inductive hypothesis. Finally, since the scheme $X'$ is smooth over
$k$, the canonical map defines an isomorphism
$$K_q(X',\Z/p\Z) \xto{\sim} KH_q(X',\Z/p\Z),$$
and by~\cite[Theorem~8.4]{geisserlevine} the common group vanishes for
$q > d$. We conclude from the long exact sequence that
$KH_q(X,\Z/p\Z)$ vanishes, for $q > d$, as desired.
\end{proof} 

\begin{theorem}\label{ktheorem}Let $k$ be an infinite perfect field of
positive characteristic $p$ such that strong resolution of
singularities holds over $k$. Let $R$ be a localization of a
$d$-dimensional $k$-algebra of finite type and assume that $R$ is
$K_{d+1}$-regular. Then the  group $K_{d+1}(R)/pK_{d+1}(R)$ is zero.
\end{theorem}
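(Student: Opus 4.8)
The plan is to compare $K$-theory with $\Z/p\Z$-coefficients to its homotopy-invariant version $KH$ by means of the spectral sequence $E^1_{s,t} = N_sK_t(R,\Z/p\Z) \Rightarrow KH_{s+t}(R,\Z/p\Z)$ recalled in the proof of Proposition~\ref{KHproposition}, and to use the $K_{d+1}$-regularity hypothesis to turn the vanishing of $KH_{d+1}(R,\Z/p\Z)$ supplied by that proposition into the desired vanishing of $K_{d+1}(R,\Z/p\Z)$.

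First I would record the geometric input. Writing $R$ as a localization of a $d$-dimensional $k$-algebra $A$ of finite type, the scheme $\Spec R$ is the limit of the cofiltered diagram of the affine open subschemes $\Spec A_f$, with the localization maps as (affine) transition maps; these schemes are separated of finite type over $k$ of dimension at most $d$. Hence Proposition~\ref{KHproposition} applies and gives $KH_q(R,\Z/p\Z) = 0$ for all $q > d$; in particular $KH_{d+1}(R,\Z/p\Z) = 0$.

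Next I would feed the regularity hypothesis into the $E^1$-page. By a theorem of Vorst~\cite{vorst}, $K_{d+1}$-regularity of $R$ implies $K_q$-regularity of $R$ for every integer $q \leqslant d+1$, so $N_sK_q(R) = 0$ for all $s \geqslant 1$ and all $q \leqslant d+1$; since $N_s$ commutes with reduction modulo $p$, the universal coefficient sequence then shows $N_sK_q(R,\Z/p\Z) = 0$ for all $s \geqslant 1$ and all $q \leqslant d+1$ as well. On the diagonal $s+t = d+1$ one has $t = d+1-s \leqslant d$ as soon as $s \geqslant 1$, so every $E^1$-term there vanishes except $E^1_{0,d+1} = K_{d+1}(R,\Z/p\Z)$; moreover the differentials $d^r\colon E^r_{r,d+2-r} \to E^r_{0,d+1}$ that could hit this term are subquotients of $E^1_{r,d+2-r} = N_rK_{d+2-r}(R,\Z/p\Z)$, which vanishes for every $r \geqslant 1$, while no differential leaves $E^r_{0,d+1}$. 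Hence the spectral sequence degenerates along this diagonal and the edge homomorphism is an isomorphism $K_{d+1}(R,\Z/p\Z) \xto{\sim} KH_{d+1}(R,\Z/p\Z)$.

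Combining the two steps gives $K_{d+1}(R,\Z/p\Z) = 0$, and the universal coefficient sequence
$$0 \to K_{d+1}(R)/pK_{d+1}(R) \to K_{d+1}(R,\Z/p\Z) \to K_d(R)[p] \to 0$$
then forces $K_{d+1}(R)/pK_{d+1}(R) = 0$, as claimed. The step that requires genuine care is the passage to the $E^1$-page: $K_{d+1}$-regularity by itself controls only the single row $t = d+1$, and it is Vorst's descending regularity theorem that kills the rows $t \leqslant d$ as well — precisely what is needed in order to identify $K_{d+1}(R,\Z/p\Z)$ as the only contribution to $KH_{d+1}(R,\Z/p\Z)$.
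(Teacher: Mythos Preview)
Your argument is correct and follows essentially the same route as the paper: Vorst's descending regularity theorem kills $N_sK_t(R)$ and hence $N_sK_t(R,\Z/p\Z)$ for $s\geqslant 1$, $t\leqslant d+1$, the spectral sequence then identifies $K_{d+1}(R,\Z/p\Z)$ with $KH_{d+1}(R,\Z/p\Z)$, Proposition~\ref{KHproposition} makes the latter vanish, and the coefficient sequence finishes. Your explicit verification that $\Spec R$ sits as a cofiltered limit of the $\Spec A_f$, and your careful tracking of which differentials could touch $E^r_{0,d+1}$, spell out steps the paper leaves implicit, but the strategy is identical.
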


\begin{proof}Since we assume $R$ is $K_{d+1}$-regular, a theorem of
Vorst~\cite[Corollary~2.1]{vorst} shows that $R$ is $K_q$-regular for all
$q \leqslant d+1$, or equivalently, that the groups $N_sK_q(R)$
vanish for all $s > 0$ and $q \leqslant d+1$. The coefficient exact
sequence 
$$0 \to N_sK_q(R)/pN_sK_q(R) \to N_sK_q(R,\Z/p\Z) \to 
\operatorname{Tor}_1^{\Z}(N_sK_{q-1}(R),\Z/p\Z) \to 0$$
then shows that the groups $N_sK_q(R,\Z/p\Z)$ vanish for $s > 0$ and
$q \leqslant d+1$. Therefore, we conclude from the spectral sequence
$$E_{s,t}^1 = N_sK_t(R,\Z/p\Z) \Rightarrow KH_{s+t}(R,\Z/p\Z)$$
that the canonical map
$$K_q(R,\Z/p\Z) \to KH_q(R,\Z/p\Z)$$
is an isomorphism for $q \leqslant d+1$. Now, for $q = d+1$,
Proposition~\ref{KHproposition} shows that the common group is zero, and
hence, the coefficient sequence
$$0 \to K_{d+1}(R)/pK_{d+1}(R) \to K_{d+1}(R,\Z/p\Z) \to
\Tor(K_d(R),\Z/p\Z) \to 0$$
shows that the group $K_{d+1}(R)/pK_{d+1}(R)$ is zero as stated.
\end{proof}

\section{Hochschild homology}\label{hochschildhomologysection}

In this section, we prove the following general result.

\begin{theorem}\label{hhtheorem}Let $\kappa$ be a commutative ring,
let $r$ be a positive integer, and let $A$ be the $\kappa$-algebra
$A = \kappa[x_1,\dots,x_r]/(x_ix_j \mid 1 \leqslant i \leqslant j
\leqslant r)$. Then, for all $1 \leqslant q \leqslant r$, the image of
the symbol $\{1+x_1, \dots, 1+x_q\}$ by the composition
$$K_q(A) \to \HH_q(A) \to \HH_q(A/\kappa)$$
of the Dennis trace map and the canonical map from absolute Hochschild
homology to Hochschild homology relative to the ground ring $\kappa$
is non-trivial.
\end{theorem}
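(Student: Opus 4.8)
The plan is to compute $\HH_*(A/\kappa)$ explicitly, exhibit the class of the symbol $\{1+x_1,\dots,1+x_q\}$ under the Dennis trace in terms of this computation, and verify it is non-zero. Write $A = \kappa[x_1,\dots,x_r]/I$ with $I = (x_ix_j)$. I would first reduce to a small piece of $\HH_*(A/\kappa)$ detectable by a multidegree argument: $A$ is $\Z^r$-graded (giving $x_\ell$ degree the $\ell$-th standard vector), so $\HH_*(A/\kappa)$ inherits a $\Z^r$-grading, and the symbol $\{1+x_1,\dots,1+x_q\}$ will have components only in multidegrees supported on the subset $\{1,\dots,q\}$; I will isolate the multidegree $(1,1,\dots,1,0,\dots,0)$ (ones in the first $q$ slots). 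The point of working relative to $\kappa$ is that in this particular multidegree the relevant part of $\HH_q(A/\kappa)$ is a free $\kappa$-module on a single generator, so non-triviality can be checked after base change to a field, or even by a direct cycle computation.

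Next I would make the Dennis trace explicit. Recall $\tr\{1+x_1,\dots,1+x_q\}$ is, up to a unit in $\Z$ (a factorial coming from the antisymmetrization in the definition of the product/Steinberg relations), represented in the Hochschild complex $A^{\otimes (q+1)}$ by the antisymmetrized chain $\sum_{\sigma\in\Sigma_q}\pm\,1\otimes x_{\sigma(1)}\otimes\cdots\otimes x_{\sigma(q)}$, i.e. the image of $dlog(1+x_1)\cdots dlog(1+x_q)$ under the antisymmetrization map from $\Omega^q_{A/\kappa}$. Since $x_i x_j = 0$ in $A$, the Kähler differentials $\Omega^1_{A/\kappa}$ are easy: $\Omega^1_{A/\kappa} = (\bigoplus_i A\,dx_i)/(x_i\,dx_j + x_j\,dx_i)$, and in the relevant multidegree the form $dx_1\wedge\cdots\wedge dx_q$ (equivalently $dlog(1+x_1)\wedge\cdots\wedge dlog(1+x_q)$ after expanding, since the lower-order terms land in other multidegrees) survives. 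So the heart of the matter is: (i) the chain above is a cycle in the Hochschild complex — immediate since the differential of $1\otimes x_{\sigma(1)}\otimes\cdots$ produces only terms with an adjacent product $x_ix_j=0$ or a boundary term $x_{\sigma(1)}\otimes\cdots$, and one checks the antisymmetrized sum of these vanishes; and (ii) it is not a boundary.

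For the non-boundary assertion — which I expect to be the main obstacle — I would compute $\HH_*(A/\kappa)$ outright rather than argue by hand. The ring $A$ is a quotient of a polynomial ring by a monomial ideal, and $A \cong \kappa \oplus V \oplus \bigoplus_{j\ge 2}(\text{nothing})$... more precisely, as a graded module $A = \kappa \cdot 1 \oplus \bigoplus_\ell \kappa\cdot x_\ell$ is the trivial square-zero extension $\kappa \ltimes V$ with $V = \kappa^r$ placed in the $r$ distinct multidegrees. Hochschild homology of a square-zero (trivial) extension $\kappa\ltimes V$ is classical: there is a decomposition $\HH_*(\kappa\ltimes V/\kappa) \cong \bigoplus_{n\ge 0}\HH_*(\kappa; (V^{\otimes n})_{h\Z/n})$-type formula, and in our multigraded situation the multidegree $(1,\dots,1,0,\dots,0)$ piece of $\HH_q$ is exactly $\kappa$, generated by the cyclic class of $x_1\otimes\cdots\otimes x_q$ (the $n=q$, distinct-weights summand has no cyclic-group coinvariants to kill anything since all tensor factors are distinct). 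I would cite or reproduce the relevant computation for trivial square-zero extensions (this goes back to work on Hochschild homology of dual numbers and its generalizations) to conclude that this piece is free of rank one over $\kappa$ on the class of our cycle. Comparing with step (ii): the Dennis trace image in that multidegree is (a $\Z$-unit multiple of) the generator, hence non-zero, which is exactly the claim for $1\le q\le r$. I would finish by remarking that the factorials appearing are units up to sign only when we do not divide — but since we only need non-triviality, not an exact identification, it suffices that the antisymmetrized cycle is a nonzero integral multiple of a $\kappa$-basis element, which holds because $\HH_q$ in this multidegree is $\kappa$-free.
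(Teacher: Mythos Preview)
Your overall strategy is the paper's, but one step is wrong as stated. The multidegree-$(1,\dots,1,0,\dots,0)$ piece of $\HH_q(A/\kappa)$ is \emph{not} free of rank one: it is free of rank $(q-1)!$. The multigrading does not separate $1\otimes x_1\otimes x_2\otimes\cdots\otimes x_q$ from $1\otimes x_2\otimes x_1\otimes x_3\otimes\cdots\otimes x_q$, and for $q\geqslant 3$ these lie in distinct nonzero homology classes. The paper repairs this by decomposing the cyclic bar complex more finely, by \emph{cyclical words} (orbits of a word $(x_{i_1},\dots,x_{i_m})$ under cyclic rotation) rather than by multidegree. The summand indexed by the single cyclical word $[(x_1,\dots,x_q)]$ is the sub-cyclic $\kappa$-module generated by the $(q-1)$-simplex $x_1\otimes\cdots\otimes x_q$; a direct two-term normalized-complex calculation (the paper's Lemma preceding the proof) shows its $\HH_q$ is free of rank one with generator $\sum_{0\leqslant u<q}(-1)^{(q-1)u}\,1\otimes t_{q-1}^u(x_1\otimes\cdots\otimes x_q)$. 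This is exactly the square-zero-extension computation you gesture at, carried out explicitly.

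On the trace side your factorial worry is misplaced, and in positive characteristic you cannot leave it unresolved. The Dennis trace is multiplicative for the shuffle product and sends $\{1+x_i\}$ to $d\log(1+x_i)$, represented by $1\otimes x_i-x_i\otimes x_i$; the shuffle product $(1\otimes x_1)*\cdots*(1\otimes x_q)$ equals $\sum_{\sigma\in S_q}\operatorname{sgn}(\sigma)\,1\otimes x_{\sigma(1)}\otimes\cdots\otimes x_{\sigma(q)}$ with no extra integer factor. Projecting to the cyclical-word summand $[(x_1,\dots,x_q)]$ retains precisely the $q$ cyclic permutations, and since a $q$-cycle has sign $(-1)^{q-1}$ the result is the generator above on the nose, not a multiple of it. Once you replace the multidegree decomposition by the cyclical-word decomposition, your outline becomes a complete proof and coincides with the paper's.
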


To prove Theorem~\ref{hhtheorem}, we first evaluate the groups
$\HH_*(A/\kappa)$ that are target of the map of the statement. By
definition, these are the homology groups of the chain complex
associated with the cyclic $\kappa$-module $\HH(A/\kappa)[-]$ defined
by
$$\HH(A)[n] = A \otimes_{\kappa} \dots \otimes_{\kappa} A \hskip6mm 
\text{($n+1$ factors)}$$
with cyclic structure maps
$$\begin{aligned}
d_i & \colon \HH(A/\kappa)[n] \to \HH(A/\kappa)[n-1] \hskip6mm
(0 \leqslant i \leqslant n) \cr
s_i & \colon \HH(A/\kappa)[n] \to \HH(A/\kappa)[n+1] \hskip6mm
(0 \leqslant i \leqslant n) \cr
t_n & \colon \HH(A/\kappa)[n] \to \HH(A/\kappa)[n] \cr
\end{aligned}$$
defined by
$$\begin{aligned}
d_i(a_0 \otimes \dots \otimes a_n) & = \begin{cases}
a_0 \otimes \dots \otimes a_ia_{i+1} \otimes \dots \otimes a_n & \hskip5mm
(0 \leqslant i < n) \cr
a_na_0 \otimes a_1 \otimes \dots \otimes a_{n-1} & \hskip5mm
(i = n) \cr
\end{cases} \cr
s_i(a_0 \otimes \dots \otimes a_n) & = a_0 \otimes \dots \otimes a_i \otimes 1 \otimes a_{i+1} \otimes \dots \otimes a_n \cr
t_n(a_0 \otimes \dots \otimes a_n) & = a_n \otimes a_0 \otimes a_1 \otimes \dots \otimes a_{n-1}. \cr
\end{aligned}$$
The cyclic $\kappa$-module $\HH(A/\kappa)[-]$ admits a direct sum
decomposition as follows. Recall that a word of length
$m$ with letters in a set $S$ is defined to be a function
$$\omega \colon \{1,2, \dots, m\} \to S.$$
The cyclic group $C_m$ of order $m$ acts on the set $\{1, 2, \dots,
m\}$ by cyclic permutation of the elements. We define a cyclical word
of length $m$ with letters in $S$ to be an orbit for the induced
action on the set of words of length $m$ with letters in $S$. We write
$[\omega]$ for the orbit through $\omega$ and call the length of the
orbit the period of $[\omega]$. In particular, the set that consists
of the empty word is a cyclical word $[0]$ of length $0$ and period
$1$. Then the cyclic $\kappa$-module $\HH(A/\kappa)[-]$ decomposes as
the direct sum
$$\HH(A/\kappa)[-] = \bigoplus_{[\omega]} \HH(A/\kappa;[\omega])[-],$$
where the direct sum ranges over all cyclical words with letters in
$\{x_1,\dots,x_r\}$, where the summand $\HH(A/\kappa;[0])[-]$ is the
sub-cyclic $\kappa$-module generated by the $0$-simplex $1$, and where
the summand $\HH(A/\kappa;[\omega])[-]$ with $\omega =
(x_{i_1},\dots,x_{i_m})$, $m \geqslant 1$, is the sub-cyclic
$\kappa$-module generated by the $(m-1)$-simplex $x_{i_1} \otimes
\dots \otimes x_{i_{m}}$. 

\begin{lemma}\label{summandhomology}Let $\kappa$ be a commutative ring,
let $r$ be a positive integer, and let $A$ be the $\kappa$-algebra
$A = \kappa[x_1,\dots,x_r]/(x_ix_j \mid 1 \leqslant i \leqslant j
\leqslant r)$. Let $\omega = (x_{i_1},\dots,x_{i_m})$ be a word with
letters in $\{x_1,\dots,x_r\}$ of length $m \geqslant 0$ and period
$\ell \geqslant 1$.
\begin{enumerate}
\item[(1)] If $m = 0$, then $\HH_0(A/\kappa;[\omega])$ is the free
  $\kappa$-module of rank one generated by the class of the cycle $1$
  and the remaining homology groups are zero.  
\item[(2)] If $m$ is odd or $\ell$ is even, then
  $\HH_{m-1}(A/\kappa;[\omega])$ and $\HH_m(A/\kappa;[\omega])$ are
  free $\kappa$-modules of rank one generated by the classes of the
  cycles $x_{i_1} \otimes \dots \otimes x_{i_m}$ and 
  $\sum_{0 \leqslant u < \ell}
  (-1)^{(m-1)u}t_ms_{m-1}t_{m-1}^u(x_{i_1} \otimes \dots \otimes
  x_{i_m})$, respectively, and the remaining homology groups are zero. 
\item[(3)] If $m \geqslant 2$ is even and $\ell$ is odd, then
  $\HH_{m-1}(A/\kappa;[\omega])$ is isomorphic to $\kappa/2\kappa$
  generated by the class of the cycle $x_{i_1} \otimes \dots \otimes
  x_{i_m}$, there is an isomorphism of the $2$-torsion
  sub-$\kappa$-module $\kappa[2] \subset \kappa$ onto
  $\HH_m(A/\kappa;[\omega])$ that takes $a \in \kappa[2]$ to the class
  of the cycle $a \cdot \sum_{0 \leqslant u < \ell}
  (-1)^{mu}t_ms_{m-1}t_{m-1}^u(x_{i_1} \otimes \dots \otimes
  x_{i_m})$, and the remaining homology groups are zero.
\end{enumerate}
\end{lemma}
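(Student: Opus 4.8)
The plan is to compute the homology of each summand $\HH(A/\kappa;[\omega])[-]$ directly from the explicit cyclic structure. The case $m = 0$ is immediate: the summand $\HH(A/\kappa;[0])[-]$ is the constant simplicial $\kappa$-module $\kappa$, generated in every simplicial degree by $1 \otimes \dots \otimes 1$, so that its homology is $\kappa$ concentrated in degree zero, which is statement~(1). From now on assume $m \geqslant 1$ and write $j = m/\ell$, so that $\omega$ is the $j$-fold concatenation of a word of length $\ell$ and primitive period. The condition ``$m$ odd or $\ell$ even'' fails exactly when $\ell$ is odd and $j$ is even, and the $2$-torsion in part~(3) will come precisely from this.

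The first step is to identify the underlying chain complex $C_{\bullet} = \HH(A/\kappa;[\omega])[-]$. Since every product of two of the generators $x_i$ is zero in $A$, a $\kappa$-basis of $C_n$ is given by the tensors $a_0 \otimes \dots \otimes a_n$ with each $a_i \in \{1,x_1,\dots,x_r\}$ whose string of non-unit entries, read from left to right, is one of the $\ell$ distinct cyclic rotations of $\omega$; thus $C_n = 0$ for $n < m-1$, while $C_{m-1}$ is free on the $\ell$ rotations of $\omega$ and $C_n$ is free of rank $\binom{n+1}{m}\ell$ for $n \geqslant m-1$. On these basis elements the face operator $d_i$ deletes a unit entry, merging it into a neighbouring factor, unless it would multiply two of the $x_i$ together, in which case it kills the basis element; the degeneracies insert unit entries, and $t_n$ permutes the factors cyclically, contributing the usual sign $(-1)^n$ to Connes' operator and hence to the formulas of parts (2) and (3).

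The computation then goes as follows. One checks first that $\partial_m\colon C_m \to C_{m-1}$ kills every basis element in which the single unit entry is \emph{not} in the initial slot, while $\partial_m(1 \otimes b_1 \otimes \dots \otimes b_m) = (b_1,\dots,b_m) + (-1)^m(b_m,b_1,\dots,b_{m-1})$ is a sum of two consecutive rotations with a sign; hence, writing $C'_m \subseteq C_m$ for the rank-$\ell$ submodule spanned by the necklaces $1 \otimes b_1 \otimes \dots \otimes b_m$ with $(b_1,\dots,b_m)$ a rotation of $\omega$, the restriction of $\partial_m$ to $C'_m$ is, in the evident bases, the circulant endomorphism $1 + (-1)^m S$ of $\kappa^{\ell}$, where $S$ is the cyclic shift of order $\ell$. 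The main point is then to show that $C_{\bullet}$ is quasi-isomorphic to the two-term complex $[\,C'_m \xrightarrow{\partial_m} C_{m-1}\,]$ placed in degrees $m$ and $m-1$; equivalently, that the span of the basis elements of $C_m$ with the unit entry in a non-initial slot, together with all of $C_{>m}$, forms an acyclic subcomplex. I would prove this by organising the insertions of unit entries into a Koszul-type contracting homotopy. Granting it, $\HH_{m-1}(A/\kappa;[\omega]) = \coker(1 + (-1)^m S)$ and $\HH_m(A/\kappa;[\omega]) = \ker(1 + (-1)^m S)$ on $\kappa^{\ell}$, and one computes $\det(1 + (-1)^m S) = 1 - (-1)^{\ell(m+1)}$. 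This is $0$ unless $m$ is even and $\ell$ is odd, in which case it is $2$: in the former case the endomorphism has free kernel and free cokernel of rank one, the cokernel generated by the class of $x_{i_1}\otimes\dots\otimes x_{i_m}$ and the kernel by $\sum_{0\leqslant u<\ell}(-1)^{(m-1)u}t_m s_{m-1}t_{m-1}^u(x_{i_1}\otimes\dots\otimes x_{i_m})$; in the latter case its Smith normal form is $\operatorname{diag}(1,\dots,1,2)$, so the cokernel is $\kappa/2\kappa$ and the kernel is $\kappa[2]$, with the generators of parts (2) and (3) respectively (the two descriptions of the kernel vector agreeing on $2$-torsion elements). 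The remaining homology groups vanish by the quasi-isomorphism.

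The main obstacle is twofold: establishing the acyclicity of $C_{\bullet}$ in simplicial degrees $> m$ — the one genuinely combinatorial point, where a careful contracting homotopy is needed — and keeping the signs straight. The signs are exactly what produce the trichotomy: the coefficient $(-1)^m$ in the circulant is the sign of the wrap-around face $d_m$, and whether the two rotations in $\partial_m(1 \otimes b_1 \otimes \dots \otimes b_m)$ cancel, contribute a single free summand, or reinforce to give multiplication by $2$ is governed by the parity of $\ell(m+1)$, with reinforcement — hence the $2$-torsion — occurring precisely when $m$ is even and $\ell$ is odd. I would carry out this sign computation explicitly and match the resulting generators against the cycles named in the statement.
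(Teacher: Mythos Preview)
Your approach is correct and essentially parallel to the paper's, but you are making the ``main obstacle'' harder than it needs to be. The subcomplex you single out---the span of the degree-$m$ basis elements with the unit entry in a non-initial slot, together with all of $C_{>m}$---is \emph{exactly} the subcomplex of degenerate simplices of the simplicial $\kappa$-module $\HH(A/\kappa;[\omega])[-]$: a basis tensor is degenerate if and only if it has a $1$ in some slot $\geqslant 1$, and in degrees $>m$ every basis tensor has at least two $1$'s, hence at least one in a non-initial slot. Its acyclicity is therefore the normalization theorem for simplicial modules, which the paper simply invokes (citing \cite[Theorem~8.3.8]{weibel1}) rather than reproving by an ad hoc contracting homotopy. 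Once you pass to the normalized complex $D_*$, it is literally the two-term complex $[\,C_m'\xrightarrow{\partial_m} C_{m-1}\,]$ you describe, and no further work is needed to get there.

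From that point the two arguments differ only cosmetically. You keep the differential as the circulant $1+(-1)^mS$ on $\kappa^{\ell}$ and analyze it via Smith normal form; the paper instead writes down an explicit isomorphism $D_*\cong \kappa[C_\ell]\to\kappa[C_\ell]$ that absorbs the sign $(-1)^m$ into a twist $\tau^u\mapsto(-1)^{(m-1)u}t_{m-1}^u(\omega)$ (case~(2)) or $\tau^u\mapsto(-1)^{mu}t_{m-1}^u(\omega)$ (case~(3)), so that the differential becomes simply $1-\tau$ or $1+\tau$. The well-definedness of this twist (one needs $(-1)^{(m-1)\ell}=1$, resp.\ $(-1)^{m\ell}=1$) is exactly the parity condition separating the two cases. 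The paper then reads off kernel and cokernel of $1\pm\tau$ by a short diagram chase with the augmentation $\kappa[C_\ell]\to\kappa$, which is equivalent to your Smith normal form computation. Your determinant calculation $1-(-1)^{\ell(m+1)}$ and the identification of generators are correct and match the statement.
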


\begin{proof}Let $D_*$ be the chain complex given by the quotient of the chain complex associated with the simplicial $\kappa$-module $\HH(A/\kappa;[\omega])[-]$ by the subcomplex of degenerate simplices. We recall that the canonical projection induces an isomorphism of $\HH_q(A/\kappa;[\omega])$ onto $H_q(D_*)$; see for example~\cite[Theorem~8.3.8]{weibel1}. We evaluate the chain complex $D_*$ in the three cases~(1)--(3). 

First, in the case~(1), $D_0$ is the free $\kappa$-module generated by $1$ and $D_q$ is zero, for $q > 0$. This proves statement~(1).

Next, in the case~(2), let $C_{\ell}$ be the cyclic group of order $\ell$, and let $\tau$ be a generator. We define $D_*'$ to be the chain complex with $D_q'= \kappa[C_{\ell}]$, if $q = m-1$ or $q = m$, and zero, otherwise, and with differential $d' \colon D_m' \to D_{m-1}'$ given by multiplication by $1- \tau$. Then the map $\alpha \colon D_*' \to D_*$ defined by
$$\begin{aligned}
\alpha_{m-1}(\tau^u) & = 
(-1)^{(m-1)u} t_{m-1}^u(x_{i_0} \otimes \dots \otimes x_{i_m}) \cr
\alpha_m(\tau^u) & = 
(-1)^{(m-1)u}t_ms_{m-1}t_{m-1}^u(x_{i_0} \otimes \dots \otimes x_{i_m}) \cr
\end{aligned}$$
is an isomorphism of chain complexes, since $(m-1)\ell$ is even. Now, the homology groups
$H_{m-1}(D_*')$ and $H_m(D_*')$ are free $\kappa$-modules of rank $1$
generated by the class of $1$ and the norm element $N = 1 + \tau +
\dots + \tau^{\ell-1}$, respectively. This proves the statement~(2). 

Finally, in the case~(3), let $C_{\ell}$ be the cyclic group of order
$\ell$, and let $\tau$ be a generator. 
We define $D_*''$ to be the chain complex with $D_q''=
\kappa[C_{\ell}]$, if $q = m-1$ or $q = m$, and zero, otherwise, and
with differential $d'' \colon D_m'' \to D_{m-1}''$ given by
multiplication by $1 + \tau$. Then the map
$\beta \colon D_*'' \to D_*$ defined by 
$$\begin{aligned}
\beta_{m-1}(\tau^u) & = 
(-1)^{mu}t_{m-1}^u(x_{i_0} \otimes \dots \otimes x_{i_m}) \cr
\beta_m(\tau^u) & = 
(-1)^{mu}t_ms_{m-1}t_{m-1}^u(x_{i_0} \otimes \dots \otimes x_{i_m}) \cr
\end{aligned}$$
is an isomorphism of chain complexes, since $m$ is even. Hence, to
prove statement~(3), it suffices to show that the following sequence
of $\kappa$-modules is exact.
$$0 \to \kappa[2] \xto{N} \kappa[C_{\ell}] \xto{1+\tau}
\kappa[C_{\ell}] \xto{\bar{\epsilon}} \kappa/2\kappa \to 0.$$
To this end, we consider the following commutative diagram with exact
rows.
$$\xymatrix{
{ 0 } \ar[r] &
{ I[C_{\ell}] } \ar[r] \ar[d]^{1 + \tau} &
{ \kappa[C_{\ell}] } \ar[r]^(.55){\epsilon} \ar[d]^{1 + \tau} &
{ \kappa } \ar[r] \ar[d]^{2} &
{ 0 } \cr
{ 0 } \ar[r] &
{ I[C_{\ell}] } \ar[r] &
{ \kappa[C_{\ell}] } \ar[r]^(.55){\epsilon} &
{ \kappa } \ar[r] &
{ 0 } \cr
}$$
The augmentation ideal $I[C_{\ell}]$ is equal to the
sub-$k[C_{\ell}]$-module generated by $1-\tau$. Since $\ell$ is odd,
$\tau^2$ is a generator of $C_{\ell}$, and hence, $1-\tau^2 =
(1+\tau)(1-\tau)$ is a generator of $I[C_{\ell}]$. This shows that the
left-hand vertical map $1+\tau$ is an isomorphism. Finally, the
following diagram commutes.
$$\xymatrix{
{ \kappa[2] } \ar@{=}[r] \ar[d]^{N} &
{ \kappa[2] } \ar@{^{(}->}[d] \cr
{ \kappa[C_{\ell}] } \ar[r]^{\epsilon} &
{ \kappa } \cr
}$$
Indeed, $\epsilon \circ N$ is equal to multiplication by $\ell$ which
is congruent to $1$ modulo $2$. This shows that the sequence in
question is exact. Statement~(3) follows. 
\end{proof}

\begin{remark}\label{productremark}For $\kappa$ a field of
characteristic zero, the Hochschild homology of the $\kappa$-algebra
$A$ in Lemma~\ref{summandhomology} was first evaluated by
Lindenstrauss~\cite[Theorem~3.1]{lindenstrauss} who also determined
the product structure of the graded $\kappa$-algebra $\HH_*(A/\kappa)$.
\end{remark}

\begin{proof}[Proof of Theorem~\ref{hhtheorem}]We let $\omega$ be the
word $(x_1,\dots,x_q)$ and consider the following compotision of the
map of the statement and the projection onto the summand $[\omega]$. 
$$K_q(A) \to
\HH_q(A) \to
\HH_q(A/\kappa) \xto{\pr_{[\omega]}}
\HH_q(A/\kappa,[\omega])$$
The Dennis trace map is a map of graded rings and takes the symbol
$\{1+x_i\}$ to the Hochschild homology class $d\log (1+x_i)$
represented by the cycle $1 \otimes x_i - x_i \otimes x_i$; see for example~\cite[Corollary~6.4.1]{gh},
\cite[Proposition~2.3.1]{hm4},
and~\cite[Proposition~1.4.5]{h}. Hence, 
$\{1+x_1,\dots,1+x_q\}$ is mapped to $d\log(1+x_1)
\dots  d\log(1+x_q)$. The product on Hochschild homology is given by
the shuffle product $*$, and moreover,
$$\pr_{[\omega]}(d\log(1+x_1)* \dots * d\log(1+x_q)) 
= \pr_{[\omega]}((1\otimes x_1)* \dots *(1\otimes x_q))$$
since summands that include a factor $x_i \otimes x_i$ are annihilated
by $\pr_{[\omega]}$. Now,
$$(1 \otimes x_1) * \dots * (1 \otimes x_q) = \sum_{\sigma}
\operatorname{sgn}(\sigma) 1 \otimes x_{\sigma(1)} \otimes
\dots \otimes x_{\sigma(q)},$$
where the sum ranges over all permutations of $\{1,2,\dots,q\}$, and
hence,
$$\pr_{[\omega]}((1 \otimes x_1) * \dots * (1 \otimes x_q)) =
\sum_{\tau} \operatorname{sgn}(\tau) 1 \otimes x_{\tau(1)}
\otimes \dots \otimes x_{\tau(q)},$$
where the sum range over all cyclic permutations of
$\{1,2, \dots, q\}$. By Lemma~\ref{summandhomology}~(2), this class is the
generator of $\HH_q(A/\kappa;[\omega])$. The theorem follows. 
\end{proof}

\section{Proof of Theorem~\ref{maintheorem}}

In this section, we prove Theorem~\ref{maintheorem} of the
introduction and a number of generalizations of this result.

\begin{proof}[Proof of Theorem~\ref{maintheorem}]It suffices to show
that for every maximal ideal $\mathfrak{m} \subset R$, the local ring 
$R_{\mathfrak{m}}$ is regular. The assumption that $R$ is
$K_{d+1}$-regular implies by~\cite[Theorem~2.1]{vorst1}
and~\cite[Corollary~2.1]{vorst} that the local ring $R_{\mathfrak{m}}$ is
$K_q$-regular for all $q \leqslant d+1$. The local ring
$R_{\mathfrak{m}}$ has dimension $d_{\mathfrak{m}} \leqslant
d$. We first argue that we may assume that $d_{\mathfrak{m}} =
d$. Let $I \subset R$ be the intersection of the minimal prime ideals 
$\mathfrak{p}_1,\dots,\mathfrak{p}_n \subset R$ that are not contained
in $\mathfrak{m}$. We claim that $\mathfrak{m} + I = R$. For if not, 
the ideal $\mathfrak{m} + I$ would be contained in a maximal ideal of
$R$ which necessarily would be $\mathfrak{m}$. Now, for each
$1\leqslant i \leqslant n$, we choose $y_i \in \mathfrak{p}_i$ with
$y_i \notin \mathfrak{m}$. Then $y = y_1 \dots y_n$ is in $I$, but not
in $\mathfrak{m}$. The claim follows. Now, by the Chinese remainder
theorem, there exists $r \in R$ such that $r \equiv 1 \mod
\mathfrak{m}$ and $r \equiv 0 \mod I$. We define $R' = R[1/r]$ and
$\mathfrak{m}' = \mathfrak{m}R'$. Then $\mathfrak{m}' \subset R'$ is a
maximal ideal, since $R'/\mathfrak{m}' = (R/\mathfrak{m})[1/r] =
R/\mathfrak{m}$, and the canonical map $R_{\mathfrak{m}} \to
R_{\mathfrak{m}'}'$ is an isomorphism. Moreover, the $k$-algebra $R'$
is of finite type, and since every minimal prime ideal of $R'$ is
contained in $\mathfrak{m}'$, we have $\dim R' = \dim
R_{\mathfrak{m}'}' = d_{\mathfrak{m}}$. Therefore, we may assume that
$d = d_{\mathfrak{m}}$. Hence, Theorem~\ref{ktheorem} shows that 
$$K_{d_{\mathfrak{m}}+1}(R_{\mathfrak{m}})/pK_{d_{\mathfrak{m}}+1}(R_{\mathfrak{m}})
= 0.$$
We choose a set of generators $x_1,\dots,x_r$ of the maximal ideal of
the local ring $R_{\mathfrak{m}}$. Then $r \geqslant d_{\mathfrak{m}}$
with equality if and only if $R_{\mathfrak{m}}$ is
regular. By~\cite[Theorem~28.3]{matsumura}, we may choose a $k$-algebra
section of the canonical projection
$R_{\mathfrak{m}}/\mathfrak{m}^2R_{\mathfrak{m}} \to R/\mathfrak{m} =
\kappa$. These choices give rise to a $k$-algebra isomorphism
$$A = \kappa[x_1,\dots,x_r]/(x_ix_j \mid 1 \leqslant i \leqslant j
\leqslant r) \xto{\sim}
R_{\mathfrak{m}}/\mathfrak{m}^2R_{\mathfrak{m}}.$$
Hence, Theorem~\ref{hhtheorem} shows that for all $1 \leqslant q
\leqslant r$, the symbol
$$\{1+x_1,\dots,1+x_q\} \in
K_q(R_{\mathfrak{m}})/pK_q(R_{\mathfrak{m}})$$
has non-trivial image in $K_q(A)/pK_q(A)$, and therefore, is
non-zero. Since the group
$K_{d_{\mathfrak{m}}+1}(R_{\mathfrak{m}})/pK_{d_{\mathfrak{m}}+1}(R_{\mathfrak{m}})$
is zero, we conclude that $r \leqslant d_{\mathfrak{m}}$ which shows that
$R_{\mathfrak{m}}$ is a regular local ring. This completes the proof.
\end{proof}

\begin{theorem}\label{maintheoremplusr}Let $k$ be a field of
positive characteristic $p$ that is finitely generated over an
infinite perfect subfield $k'$, and assume that strong resolution of
singularities holds over $k'$. Let $R$ be a localization of a
$d$-dimensional commutative $k$-algebra of finite type and suppose
that $R$ is $K_{d+r+1}$-regular where $r$ is the transcendence degree
of $k$ over $k'$. Then $R$ is a regular ring.
\end{theorem}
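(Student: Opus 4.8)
The plan is to spread $R$ out to a localization of a finite-type algebra over the perfect field $k'$, of Krull dimension at most $d+r$, and then to quote Theorem~\ref{maintheorem} over $k'$. We may assume $R\neq 0$. First I would build the spreading-out. Let $T=k[y_1,\dots,y_m]/(g_1,\dots,g_l)$ be a $d$-dimensional finite-type $k$-algebra of which $R$ is a localization. Since $k$ is finitely generated over $k'$, it is the field of fractions of a finitely generated $k'$-subalgebra $B\subset k$; then $B$ is a domain, $\operatorname{Frac}(B)=k$, and $\dim B=r$. After replacing $B$ by $B[1/b]$ for a suitable nonzero $b\in B$, which changes neither $\dim B$ nor $\operatorname{Frac}(B)$, we may assume all coefficients of $g_1,\dots,g_l$ lie in $B$, and we set
$$T_0=B[y_1,\dots,y_m]/(g_1,\dots,g_l),$$
a finitely generated $k'$-algebra with $T_0\otimes_B k=T$. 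Since $k=\operatorname{Frac}(B)$, the ring $T$ is a localization of $T_0$, hence so is $R$; and since $R$ is a nonzero $k$-algebra, every nonzero element of $B$ is a unit in $R$, so we remain free to replace $B$ by any $B[1/f]$ and $T_0$ by $T_0[1/f]$ without losing the property that $R$ is a localization of $T_0$.

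Next I would bound $\dim T_0$. The scheme $\Spec(T_0)$ has finitely many irreducible components; a component that does not dominate $\Spec(B)$ maps into a proper closed subscheme $V(\mathfrak{p})$ with $\mathfrak{p}\subset B$ a nonzero prime, and inverting in $B$ the product of one nonzero element chosen from each such $\mathfrak{p}$ (which can also be seen via generic flatness over $B$) removes all of them. After this shrinking, every component $Z$ of $\Spec(T_0)$ dominates $\Spec(B)$; writing $Z_\eta$ for its fibre over the generic point $\eta$ of $\Spec(B)$, we have $Z_\eta\subseteq\Spec(T)$, and since $Z$ is integral and of finite type over $k'$ and dominates $\Spec(B)$, the dimension formula for finitely generated algebras over a field gives
$$\dim Z=\dim(\Spec B)+\dim Z_\eta=r+\dim Z_\eta\leqslant r+\dim T=d+r.$$
Hence $\dim T_0=\max_Z\dim Z\leqslant d+r$.

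Finally I would invoke the earlier results. By hypothesis $R$ is $K_{d+r+1}$-regular, so by Vorst's theorem~\cite[Corollary~2.1]{vorst} it is $K_{\dim(T_0)+1}$-regular, as $\dim T_0\leqslant d+r$. Since $k'$ is an infinite perfect field of characteristic $p$ over which strong resolution of singularities holds, Theorem~\ref{maintheorem}, applied to the $\dim(T_0)$-dimensional finite-type $k'$-algebra $T_0$ and its localization $R$, shows that $R$ is regular. The step I expect to be the main obstacle is the dimension bound $\dim T_0\leqslant d+r$: the spreading-out has to be chosen so that no component of $\Spec(T_0)$ lying over the special points of $\Spec(B)$ inflates the dimension, which is exactly why one first deletes the components that do not dominate $\Spec(B)$. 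Everything else is routine bookkeeping about compositions of localizations.
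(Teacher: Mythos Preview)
Your proof is correct and follows the same strategy as the paper: realize $R$ as a localization of a finite-type $k'$-algebra of dimension at most $d+r$, then invoke Theorem~\ref{maintheorem}. The paper simply asserts the existence of such a $(d+r)$-dimensional model $R'$ and then passes to local rings $R_{\mathfrak{p}}\cong R'_{\mathfrak{p}'}$ before applying Theorem~\ref{maintheorem}, whereas you supply the spreading-out and dimension-bound details and apply Theorem~\ref{maintheorem} directly to $R$; both routes are valid and essentially the same argument.
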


\begin{proof}We can write $R$ as the localization $f \colon R' \to S^{-1}R' = R$
of a $(d+r)$-dimensional commutative $k'$-algebra $R'$ of finite type
with respect to a multiplicative subset $S \subset R'$. Let $\mathfrak{p}
\subset R$ be a prime ideal. Then, by~\cite[Theorem~2.1]{vorst1}, the local
ring $R_{\mathfrak{p}}$ again is $K_{d+r+1}$-regular. Now, let $\mathfrak{p}' =
f^{-1}(\mathfrak{p}) \subset R'$. Then the map $f$ induces an
isomorphism of $R'_{\mathfrak{p}'}$ onto
$R_{\mathfrak{p}}$. Therefore, we conclude from
Theorem~\ref{maintheorem} that $R_{\mathfrak{p}}$ is a regular
ring. This proves that $R$ is a regular ring as stated.
\end{proof}

\begin{theorem}\label{maintheoremplus}Let $p$ be a prime number and
assume that strong resolution of singularities holds over all infinite
perfect fields of characteristic $p$. Let $k$ be any field that
contains an infinite perfect subfield of characteristic $p$, let $R$
be a commutative $k$-algebra essentially of finite type, and assume
that $R$ is $K_q$-regular for all $q$. Then $R$ is a regular ring.
\end{theorem}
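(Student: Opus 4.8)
The plan is to deduce Theorem~\ref{maintheoremplus} from Theorem~\ref{maintheoremplusr} by a limit argument over the finitely generated subfields of $k$ that contain $k'$.

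Since regularity is a local property and, by~\cite[Theorem~2.1]{vorst1}, $K_q$-regularity for all $q$ is inherited by localizations, it suffices to prove that a local ring $R$ essentially of finite type over $k$ and $K_q$-regular for all $q$ is regular; such an $R$ is noetherian of some finite dimension $d$. Writing $R = B_{\mathfrak{q}}$ for a finite-type $k$-algebra $B$, the ideal of $B$ is generated by finitely many elements whose coefficients lie in a subfield $k_0 \subseteq k$ finitely generated over $k'$, so that $B = B_0 \otimes_{k_0} k$ for a finite-type $k_0$-algebra $B_0$. Let $k_\lambda$ range over the subfields of $k$ that contain $k_0$ and are finitely generated over $k'$, and set $B_\lambda = B_0 \otimes_{k_0} k_\lambda$, $\mathfrak{q}_\lambda = \mathfrak{q} \cap B_\lambda$, and $R_\lambda = (B_\lambda)_{\mathfrak{q}_\lambda}$. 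Then $R = \colim_\lambda R_\lambda$, and each $R_\lambda$ is essentially of finite type over $k_\lambda$, hence---as $k_\lambda$ is essentially of finite type over $k'$---essentially of finite type over the infinite perfect field $k'$. Because algebraic $K$-theory, the iterated $N$-construction, and homotopy $K$-theory all commute with filtered colimits of rings, the vanishing of $N_sK_q(R)$ for all $s > 0$ and all $q$, together with the coefficient sequence and the spectral sequence $E^1_{s,t} = N_sK_t(R,\Z/p\Z) \Rightarrow KH_{s+t}(R,\Z/p\Z)$, shows that $K_q(R,\Z/p\Z) \to KH_q(R,\Z/p\Z)$ is an isomorphism for every $q$.

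Suppose, for a contradiction, that $R$ is not regular. Then, exactly as in the proof of Theorem~\ref{maintheorem} (using~\cite[Theorem~28.3]{matsumura} to produce a ring isomorphism $A = \kappa[x_1,\dots,x_r]/(x_ix_j \mid 1 \leqslant i \leqslant j \leqslant r) \xrightarrow{\sim} R/\mathfrak{m}^2R$, where $\kappa = R/\mathfrak{m}$ and $r = \dim_\kappa(\mathfrak{m}/\mathfrak{m}^2) > d$), Theorem~\ref{hhtheorem} with ground ring $\kappa$ shows that $\{1+x_1,\dots,1+x_{d+1}\}$ is non-zero in $K_{d+1}(R)/pK_{d+1}(R)$, and hence, by the isomorphism of the previous paragraph, non-zero in $KH_{d+1}(R,\Z/p\Z) = \colim_\lambda KH_{d+1}(R_\lambda,\Z/p\Z)$. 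On the other hand, realizing $\Spec R_\lambda$ as a cofiltered limit with affine transition maps of finite-type $k'$-schemes, and applying the Chinese-remainder reduction of the proof of Theorem~\ref{maintheorem} over $k'$ to shrink the ambient scheme, Proposition~\ref{KHproposition} over $k'$ controls the degrees in which $KH_q(R_\lambda,\Z/p\Z)$ can be non-zero; the contradiction is that, in the colimit, $KH_{d+1}(R,\Z/p\Z) = 0$. Therefore $R$ is regular, and Theorem~\ref{maintheoremplus} follows.

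The step I expect to be the main obstacle is this last one. After the Chinese-remainder reduction, the finite-type $k'$-scheme containing $R_\lambda$ has dimension $\dim R_\lambda$ plus the transcendence degree of the residue field of $R_\lambda$ over $k'$, and this quantity is unbounded in $\lambda$ whenever $k$ has infinite transcendence degree over $k'$; so for the fixed degree $d+1$ no single $R_\lambda$ need satisfy $KH_{d+1}(R_\lambda,\Z/p\Z) = 0$, and one cannot apply Theorem~\ref{maintheoremplusr} to the $R_\lambda$ one at a time, since they are not known to be $K$-regular---only their colimit $R$ is. Reconciling this is exactly where the hypothesis that $R$ is $K_q$-regular for \emph{all} $q$, rather than up to a bounded degree, is used: one must run the arguments of Theorems~\ref{ktheorem} and~\ref{maintheoremplusr} uniformly over the system $\{R_\lambda\}$, tracking the compatible symbols and the growth of the ambient dimensions, so that the identification $K_q(-,\Z/p\Z) \cong KH_q(-,\Z/p\Z)$ is applied to $R$ itself. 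This compatible bookkeeping is the technical heart of the argument.
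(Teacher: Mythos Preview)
Your attempt takes a different route from the paper's, and the obstacle you flag in your final paragraph is genuine: you cannot deduce $KH_{d+1}(R,\Z/p\Z)=0$ from Proposition~\ref{KHproposition}, because the $k'$-dimensions of the approximating schemes $\Spec R_\lambda$ are unbounded as $\lambda$ varies, and the ``compatible bookkeeping'' you gesture at does not actually produce the needed vanishing. So your argument, as written, does not close.

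The paper's proof is much shorter and never computes $KH_*(R,\Z/p\Z)$ at all. It writes $R$ as (a localization of) $R'\otimes_{k'}k$, where $k'\subseteq k$ is a finitely generated subfield containing an infinite perfect subfield and $R'$ is of finite type over $k'$; it then expresses $R$ as the filtered colimit of the rings $R'\otimes_{k'}k_\alpha$ over all finitely generated subextensions $k'\subseteq k_\alpha\subseteq k$, asserts that each $R'\otimes_{k'}k_\alpha$ is regular by Theorem~\ref{maintheoremplusr}, and concludes that $R$ is regular via~\cite[Prop.~IV.5.13.7]{ega} (a noetherian filtered colimit, along flat transition maps, of regular rings is regular). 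The key difference is that the paper works one approximation at a time and then passes to the limit on the \emph{conclusion} (regularity), rather than trying to pass to the limit on a $KH$-vanishing statement whose degree bound depends on the stage.

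That said, your diagnosis that ``one cannot apply Theorem~\ref{maintheoremplusr} to the $R_\lambda$ one at a time, since they are not known to be $K$-regular---only their colimit $R$ is'' applies verbatim to the paper's own proof: invoking Theorem~\ref{maintheoremplusr} on $R'\otimes_{k'}k_\alpha$ requires that this ring be $K_{d+r_\alpha+1}$-regular, and the paper does not explain how this follows from the $K_q$-regularity of $R$. So you have correctly located the pressure point; the paper simply asserts the step you single out as problematic rather than arguing around it.
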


\begin{proof}We can write $R$ as a localization of $R' \otimes_{k'}k$
where $k'$ is a finitely generated field that contains an infinite
perfect subfield and where $R'$ is a commutative $k'$-algebra of
finite type. Then we can write $R$ as the filtered colimit
$$R = \colim_{\alpha} R' \otimes_{k'}k_{\alpha}$$
where $k_{\alpha}$ runs through the finitely generated extensions of $k'$
contained in $k$. It follows from Theorem~\ref{maintheoremplusr} that
the rings $R' \otimes_{k'}k_{\alpha}$ are all regular. Therefore the
ring $R$ is regular by~\cite[Prop.~IV.5.13.7]{ega}.
\end{proof}

\providecommand{\bysame}{\leavevmode\hbox to3em{\hrulefill}\thinspace}
\providecommand{\MR}{\relax\ifhmode\unskip\space\fi MR }
\providecommand{\MRhref}[2]{%
  \href{http://www.ams.org/mathscinet-getitem?mr=#1}{#2}
}
\providecommand{\href}[2]{#2}

\end{document}